\newtheorem{theorem}{Theorem}[section]
\newtheorem{lemma}[theorem]{Lemma}
\newtheorem{proposition}[theorem]{Proposition}
\theoremstyle{definition}
\newtheorem{definition}[theorem]{Definition}
\newtheorem{remark}[theorem]{Remark}
\newtheorem{example}[theorem]{Example}
\newtheorem{question}[theorem]{Question}
\numberwithin{equation}{section}
\begin{document}

\title{When is arc crossing change an unknotting operation?}
\author{Zhiyun Cheng}
\address{School of Mathematical Sciences, Laboratory of Mathematics and Complex Systems, MOE, Beijing Normal University, Beijing, 100875, China}
\email{czy@bnu.edu.cn}

\author{Qian Liao}
\address{School of Mathematical Sciences, Beijing Normal University, Beijing 100875, China}
\email{liaoqian@mail.bnu.edu.cn}

\author{Jingze Song}
\address{School of Mathematical Sciences, Beijing Normal University, Beijing 100875, China}
\email{jzsong@mail.bnu.edu.cn}

\subjclass[2020]{57K10, 57M15}
\keywords{Unknotting operation, arc crossing change}

\begin{abstract}
This paper extends the study of arc crossing change, a local operation on knot diagrams recently introduced by Cericola in \cite{Cer2024}, from knot diagrams to link diagrams. We consider two types of arc crossing change on link diagrams and discuss when they are unknotting operations. Furthermore, we show that any two crossing points in an alternating knot diagram are arc crossing change admissible.
\end{abstract} 

\maketitle

\section{Introduction}\label{sec1}
In \cite{Shi2014}, Ayaka Shimizu proposed a new local operation on knot diagrams, called region crossing change. Roughly speaking, implement region crossing change on a region of a knot diagram switches all the crossing points on the boundary of this region. Ayaka Shimizu showed that for any knot diagram and any crossing point of it, there exist some regions such that if one takes region crossing changes on these regions, only this chosen crossing point is changed. It immediately follows that region crossing change is an unknotting operation for knot diagrams. Since then, the study of region crossing change has been extended to link diagrams \cites{CG2012,Che2013}, link diagrams on orientable surfaces \cites{DR2018,CCXZ2022} and link diagrams on non-orientable surfaces \cite{CS2025}.

If we regard the original crossing change as a 0-dimensional crossing change, then region crossing change can be considered as a 2-dimensional version crossing change. A logical follow-up question is how to define a 1-dimensional version crossing change. In \cite{Kin2022}, Kinuno introduced the notion of arc crossing change, which switches the two endpoints of a semi-arc if one implements arc crossing change on it. It turns out that this is also an unknotting operation for knot diagrams.

Very recently, Cericola considered another version of arc crossing change in \cite{Cer2024}, which applies to arcs of a knot diagram rather than semi-arcs. In other words, applying arc crossing change on an arc switches the two endpoints of this arc. From a geometric perspective, this operation makes the chosen arc longer. The arc crossing change introduced by Cericola is fundamentally different from all the unknotting operations mentioned above. Whether it is crossing change, arc crossing change introduced by Kinuno, or region crossing change, any two operations are commutative. In other words, if we are to carry out a series of operations, the order of these operations will not affect the final result. However, this is not the case for the arc crossing change introduced by Cericola. This phenomenon makes the study of Cericola's version of arc crossing change very fascinating, and its research methods are completely different from those of the aforementioned unknotting operations.

By using Gauss diagrams, Cericola established the following theorem.

\begin{theorem}[\cite{Cer2024}]\label{the1}
For a given a knot diagram, there exists a sequence of arc crossing changes that transforms this knot diagram into an ascending diagram.
\end{theorem}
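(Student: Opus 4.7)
The plan is to use the Gauss diagram formulation, as Cericola does: represent the knot diagram with $n$ crossings by a circle carrying $n$ oriented chords, each arrow running from the over-pass of a crossing to the under-pass. An arc of the diagram corresponds to an interval on the circle between two consecutive arrow-heads, and an arc crossing change reverses the two chords bounding that arc. A diagram is ascending iff some basepoint on the circle makes every chord have its tail before its head in the linear order it induces.

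I would prove the statement by induction on the number of crossings $n$, with the cases $n \leq 1$ immediate. For the inductive step, fix a basepoint and let $\chi$ be the chord whose head is the first arrow-head encountered going forward. If $\chi$ is already ascending, then after moving the basepoint just past its tail one obtains a smaller configuration with $n-1$ chords to which the inductive hypothesis applies. If $\chi$ is descending, perform an arc crossing change on the arc ending at $\chi$'s head (the arc that wraps once across the basepoint); this reverses $\chi$, together with a companion chord whose head was the last arrow-head before the basepoint. One then iterates on the resulting Gauss diagram.

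The main obstacle is the companion chord: arc crossing change necessarily flips two chord orientations at once, and the companion's reversal can disturb chords that were already ascending. A parity argument shows that for a fixed basepoint the number of descending chords is preserved mod $2$ by arc crossing change, so the induction must genuinely exploit basepoint shifts between stages; this is already visible in the trefoil, where a single arc crossing change followed by a basepoint shift produces an ascending diagram. The technical heart of the proof is therefore to show, by case analysis on the cyclic position of the companion's tail relative to the other chord endpoints, that the preparatory arc crossing change can always be chosen so that peeling off $\chi$ leaves the companion in a configuration compatible with the inductive hypothesis. Reconciling this case analysis with the non-commutativity of arc crossing change, so that earlier "peeled" chords are not reactivated by later moves, is what I anticipate as the most delicate step.
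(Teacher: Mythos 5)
First, note that the paper you are working from does not prove Theorem \ref{the1} at all: it is quoted from Cericola \cite{Cer2024}, where it is established via Gauss diagrams, so your proposal can only be judged on its own terms. On those terms it is not yet a proof. You have correctly translated the setup (arcs correspond to intervals between consecutive arrow-heads, and an arc crossing change reverses the two chords whose heads bound the chosen arc, which are distinct chords once $n\geq 2$), and your parity observation, together with the conclusion that basepoint shifts are indispensable, is correct and genuinely informative. But the entire content of the theorem is concentrated in the step you defer: controlling the companion chord and ordering the moves so that crossings already put into ascending position are not disturbed later. Announcing that this ``is what I anticipate as the most delicate step'' is an acknowledgment that the proof is missing, not a proof of it.

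Second, the inductive scheme as you set it up does not close even in outline. After you declare $\chi$ ``peeled off,'' its head is still an arrow-head of the full Gauss diagram, so it still bounds two arcs of the actual knot diagram; any later arc crossing change applied to either of those arcs reverses $\chi$ again. Worse, the arcs of the $(n-1)$-chord configuration obtained by deleting or ignoring $\chi$ are not arcs of the original diagram (the two arcs meeting at $\chi$'s head merge into one), so a sequence of arc crossing changes supplied by the inductive hypothesis for the reduced diagram does not lift to a sequence of legal arc crossing changes on $K$. This is exactly the non-commutativity problem you flag at the end, and it infects the very first reduction, not just the companion-chord analysis. To repair it you would need either an inductive hypothesis stated relative to a distinguished chord and a set of forbidden arcs, or a fully explicit algorithm that processes chords in the cyclic order of their heads and carries an invariant ruling out reactivation; until one of these is supplied, the argument is a plan rather than a proof.
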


As a corollary, arc crossing change of Cericola's version is also an unknotting operation for knot diagrams. From now on, whenever we refer to arc crossing change, we will always mean Cericola's version of arc crossing change. The following Figure \ref{j} shows how to use one arc crossing change to unknot the trefoil knot.

\begin{figure}[H]
\centering
\includegraphics[width=0.7\linewidth]{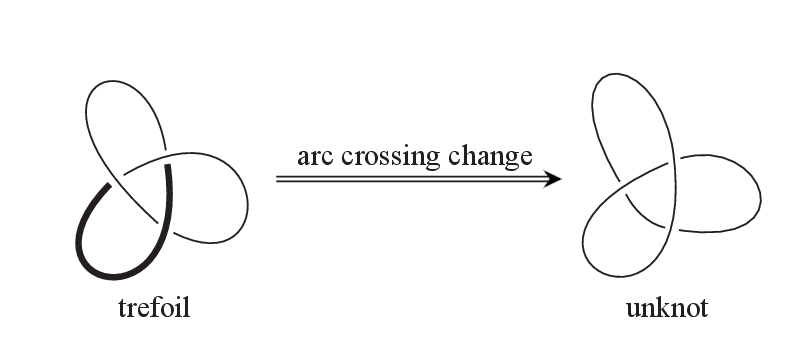}
\caption{Arc crossing change on a diagram of the trefoil knot}
\label{j}
\end{figure}

Naturally, this raises the question of whether this operation is also an unknotting operation for link diagrams. However, a new issue emerges: suppose that the two endpoints of an arc coincide at the same crossing, how does the crossing point change when an arc crossing change is applied on this arc? Within this framework, we can define two types of arc crossing changes for link diagrams, as illustrated in Figure \ref{a}.
\begin{figure}[H]
\centering
\includegraphics[width=0.80\linewidth]{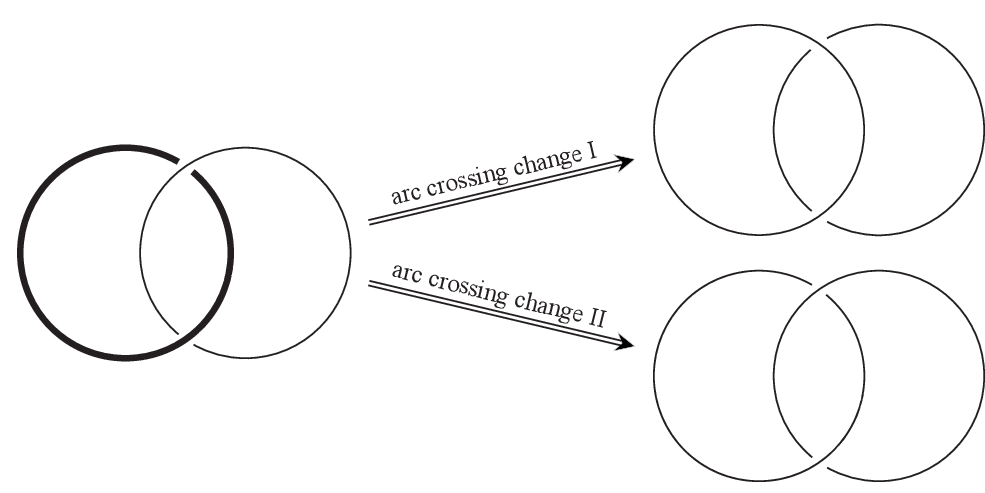}
\caption{Two types of arc crossing change on Hopf Link}
\label{a}
\end{figure}

We define arc crossing change I as follows: it changes the crossing that serves as both boundaries of the arc. Similarly, arc crossing change II is defined such that it does not change this crossing point, or equivalently, we can consider that this crossing point has been changed twice. 

In the present paper, link diagrams are always supposed to be connected, otherwise it suffices to consider each connected component of it. For a given link diagram $L$, let us use $C(L)$ to denote the set of all crossing points. A subset $S\subseteq C(L)$ is called \emph{arc crossing change admissible} if there exists a sequence of arc crossing changes which switches each crossing in $S$ but preserves all others. In the study of arc crossing changes, the underly 4-valent graph (shadow) of the link diagram is fixed. In other words, Reidemeister moves are forbidden during the transformation. Otherwise, it is easy to see that any crossing point $c$ of the diagram is arc crossing change admissible if one add a new crossing near $c$ by using the first Reidemeister move.

The following two theorems tell us when arc crossing change is an unknotting operation on link diagrams.

\begin{theorem}\label{the2}
Arc crossing change I is an unknotting operation for all link diagrams.
\end{theorem}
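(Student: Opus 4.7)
The plan is to extend Cericola's construction from knot diagrams to link diagrams using the type I convention. I first introduce an ascending notion for link diagrams: fix an ordering $L_1,\ldots,L_n$ of the components, and on each $L_i$ a base point and an orientation. The diagram is called \emph{ascending} if (i) each self-crossing of $L_i$ is first met as an over-crossing when $L_i$ is traversed from its base point, and (ii) at each mixed crossing between $L_i$ and $L_j$ with $i<j$, component $L_i$ passes over $L_j$. A standard argument shows that an ascending link diagram represents the trivial link, so it suffices to transform the given link diagram into an ascending one by arc crossing changes of type I.

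To do so, I would traverse the components in order, walking along each $L_i$ from its base point in the chosen orientation. The first time a crossing $c$ is encountered in this global order, I would check whether it agrees with the ascending condition. If not, apply an arc crossing change of type I to an arc ending at $c$ on the component that should be the over-strand. This switches $c$ to the desired sign, possibly at the cost of flipping the other endpoint of the arc; but that other endpoint lies later in the traversal and will be re-examined, and corrected if necessary, when we reach it. This greedy procedure mimics the bookkeeping that Cericola uses in the proof of Theorem \ref{the1}, the only change being that we now work with a disjoint union of Gauss circles rather than a single one.

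The only genuinely new case is the self-arc, i.e.\ an arc both of whose endpoints coincide at a single crossing. Type I prescribes that such an arc crossing change switches that crossing exactly once, which is precisely what the Gauss-diagram calculus demands for a chord whose unique pair of endpoints is toggled. Under this convention, the greedy procedure extends verbatim and terminates in an ascending link diagram after finitely many arc crossing changes of type I; applying Reidemeister moves then trivializes the link, so arc crossing change I is an unknotting operation. The main obstacle is therefore not a new conceptual ingredient but the careful verification that the greedy procedure interacts cleanly with mixed crossings and self-arcs, both of which are accommodated by the type I rule exactly as in the knot case.
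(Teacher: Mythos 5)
Your overall strategy (reduce to an ascending link diagram, which represents the unlink) matches the paper's, and your treatment of the self-arc under the type I convention is fine. The gap is in the core step of the greedy traversal, namely the sentence ``that other endpoint lies later in the traversal and will be re-examined, and corrected if necessary, when we reach it.'' An arc of $L_i$ is a maximal piece between two under-passages of $L_i$, so when you fix a crossing $c$ by an arc crossing change on an arc of $L_i$ ending at $c$, the other endpoint is another under-passage of $L_i$, and nothing forces it to lie later in the global order: it can be an earlier self-crossing of $L_i$ (the arc can wrap past the base point), or --- the genuinely new phenomenon for links --- a mixed crossing of $L_i$ with an already-processed component $L_k$, $k<i$. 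Under your ascending convention that crossing must keep $L_k$ over $L_i$; it was finalized during the traversal of $L_k$ and your procedure never revisits it, so flipping it destroys the invariant, and no termination or convergence argument is given. Deferring to ``Cericola's bookkeeping'' does not close this, because the cross-component arc endpoints are precisely where the single-circle Gauss-diagram bookkeeping does not extend verbatim.

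The paper avoids this issue with a different decomposition. For each component $K_j$ in turn it first kills all mixed under-passages: every $c\in\mathcal{U}(K_j)$ is an endpoint of an arc of $K_j$, and an arc crossing change on such an arc turns both of its endpoints into over-passages of $K_j$, so $|\mathcal{U}(K_j)|$ never increases and is strictly decreased whenever the chosen arc has a mixed-crossing endpoint; hence it is driven to $0$ with no re-examination needed. Once $\mathcal{U}(K_j)=\varnothing$, every arc of $K_j$ ends only at self-crossings of $K_j$, so Cericola's Theorem \ref{the1} can be invoked as a black box to make $K_j$ ascending without disturbing any mixed crossing already arranged, and one iterates over the components to obtain $K_1\precsim\cdots\precsim K_n$. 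To repair your argument you would need either to adopt this layered, monotone scheme or to supply an explicit invariant showing the greedy procedure cannot undo finalized crossings; as written, the quoted step is false in general.
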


\begin{figure}[H]
\centering
\includegraphics[width=0.7\linewidth]{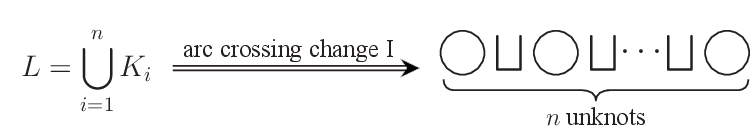}
\caption{Arc crossing change I on link diagrams}
\label{c}
\end{figure} 

\begin{theorem}\label{the3}
Let \(L=\bigcup_{i=1}^n K_i\) be a link diagram with \( n \) components, and denote the total linking number of \( L \) as \(t(L)=\sum_{1 \leq i < j \leq n} \mathrm{lk}(K_i, K_j)\), where \( \mathrm{lk}(K_i, K_j) \) denotes the linking number of components \( K_i \) and \( K_j \). The following statements hold for arc crossing change II:
\begin{enumerate}
\item If the total linking number \(t(L)\) is even, then arc crossing change II is an unknotting operation for \( L \).
\item If the total linking number \(t(L)\) is odd:
\begin{enumerate}
\item If one component of $L$ has a self-crossing, then arc crossing change II is an unknotting operation for \( L \).
\item If all components of $L$ are free of self-crossings, then arc crossing change II can transform \( L \) to a link diagram representing the disjoint union of \( n-2 \) unknots and a Hopf link.
\end{enumerate}
\end{enumerate}
\end{theorem}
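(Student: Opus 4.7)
The plan is to extend Cericola's approach from knot diagrams to link diagrams, treating the three cases separately and accounting for both the type~II convention and the multi-component interactions.

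First, I would record the following observation to use as a building block. An arc crossing change on an arc contained in a single component $K_i$ is, viewed through the chord (Gauss) diagram of that component, formally the same operation Cericola analyzes in Theorem~\ref{the1}. Consequently, by Theorem~\ref{the1} one can always use such changes inside $K_i$ to make its self-crossings ascending with respect to a chosen starting point and orientation. The catch is that these arc crossing changes also flip the inter-component crossings lying along the chosen arcs, so they couple the components together. This coupling is both the source of the obstruction in Case~2(b) and the tool we exploit in the other cases.

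For Case~(1), with $t(L)$ even, I would fix an ordering $K_1,\dots,K_n$ of the components together with starting points, and iterate: for $i=1,\dots,n$, use the observation above to make $K_i$'s self-crossings ascending, then apply short arc crossing changes on arcs of $K_i$ passing over the earlier components $K_1,\dots,K_{i-1}$ to arrange that $K_j$ (with $j<i$) is over at every inter-component crossing with $K_i$. The even parity of $t(L)$ is what guarantees that the parity of the required inter-component flips is attainable. The resulting diagram is descending, hence represents the unlink. For Case~2(a), with $t(L)$ odd but with some component $K_i$ carrying a self-crossing, the same iterative procedure runs into a single parity obstruction at the end. The idea is to absorb it by applying one additional arc crossing change on an arc of $K_i$ that passes through this self-crossing: such a move flips the self-crossing together with an odd number of inter-component crossings and acts as the needed parity toggle. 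Combining this with the procedure of Case~(1) again yields a descending diagram, hence the unlink.

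For Case~2(b), each component is an embedded simple closed curve, hence an unknot, and every crossing is inter-component. Here no self-crossing is available to serve as a parity toggle. I would run the Case~(1) procedure as far as possible to reduce the inter-component linkings, then argue that exactly one pair $(K_i,K_j)$ must retain $\mathrm{lk}(K_i,K_j)=\pm1$ while all other pairs can be arranged to have linking number zero and separated into unknots. The final diagram then represents the disjoint union of $n-2$ unknots with a Hopf link.

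The principal obstacle is the non-commutativity of arc crossing changes: each move changes the arc decomposition of the diagram, so subsequent moves act on different arcs, and we cannot treat them as a fixed family of generators in $\mathbb{F}_2^{C(L)}$. The argument must therefore be constructive and carefully ordered, ensuring in particular that later inter-component adjustments do not destroy the ascending structure already established in earlier components. The most delicate step is Case~2(b): one must show that the odd parity of $t(L)$ forces exactly one leftover Hopf-linked pair rather than several pairs with small residual linkings that cannot be individually canceled, and identifying the combinatorial invariant controlling this is where I expect the main work to lie.
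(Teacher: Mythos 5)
Your outline follows the same broad strategy as the paper (an ordered, constructive reduction component by component, with parity of the total linking number as the controlling invariant, and self-crossings used to absorb an odd parity), but it is missing the one concept that the entire proof actually turns on: the obstruction is not a global parity count of ``required flips'' but a local one. Call a component with no self-crossings and exactly one undercrossing a type C component; such a component has exactly one arc, both of whose endpoints are the same crossing, so arc crossing change II acts \emph{trivially} on it. This is the only situation in which type II differs from type I, and it is exactly why the standard Hopf link diagram is rigid. Your Case (1) argument (``the even parity of $t(L)$ is what guarantees that the parity of the required inter-component flips is attainable'') glosses over precisely the moment when a needed flip cannot be performed because the component carrying the understrand has become type C mid-procedure. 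The paper's proof is organized entirely around detecting and escaping this configuration: for $n\geq 3$ one first uses a third component to destroy the type C condition before flipping the target crossing, and the supporting lemmas (number of arcs of a self-crossing-free component equals $|\mathcal{U}(K_i)|$, which has the parity of $\sum_{j\neq i}\mathrm{lk}(K_i,K_j)$, and invariance of the parity of $t(L)$ under type II moves when no component has self-crossings) are what force the single leftover Hopf pair in Case 2(b). Without naming this obstruction your Case 2(b) ``main work'' remains undone.

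Two further concrete problems. First, your parity-toggle mechanism in Case 2(a) misstates the local effect of the operation: an arc crossing change switches only the two endpoints of the chosen arc, not ``the self-crossing together with an odd number of inter-component crossings'' along it; the correct statement is that an arc with one endpoint at a self-crossing and the other at an inter-component crossing changes $t(L)$ by $\pm 1$. Second, even granting that, the component with a self-crossing need not be adjacent to the residual Hopf-linked pair, so the parity correction must be transported across the diagram. The paper does this with a genuinely non-obvious chain construction: it walks an over-trail $a_1,a_2,\dots,a_{v+2}$ from a crossing involving the self-crossing component through successive components down to the crossing $c$ of the Hopf pair, and applies arc crossing changes serially, exploiting the fact that each move turns the next segment into an honest arc of the new diagram; the net effect changes only two crossings. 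Your proposal correctly flags non-commutativity as the principal obstacle but does not supply this construction, which is where the difficulty you identify actually gets resolved.
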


\begin{figure}[H]
\centering
\includegraphics[width=0.83\linewidth]{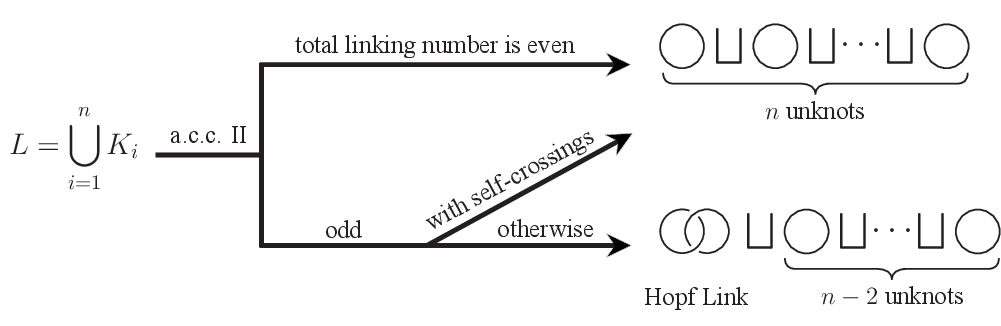}
\caption{Arc crossing change II on link diagrams}
\label{d}
\end{figure}

Compare the arc crossing change introduced by Kinuno and that introduced by Cericola, one observes that each arc crossing change of Cericola can be realized by that of Kinuno. In \cite{Cer2024}, Cericola asked whether the converse also holds. In other words, are these two different notions of arc crossing change actually equivalent? Later, we will show that this is not true in general. However, for some particular knot diagrams, these two types of arc crossing change are indeed equivalent. 

\begin{theorem}\label{the4}
Any two crossings of an alternating knot diagram with $n\geq2$ crossings are arc crossing change admissible.
\end{theorem}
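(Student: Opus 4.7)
The plan is to argue by induction on the \emph{arc-distance} between the two crossings, where the arc-distance is defined via the natural cycle of arcs on the crossings. The key structural observation is that in any knot diagram with $n$ crossings, the arcs form a Hamilton cycle on the crossings: labelling the crossings $c_1, \ldots, c_n$ in the order of their under-visits along the knot traversal, the arc $\alpha_k$ has endpoints $c_k$ and $c_{k+1}$, producing the cyclic arrangement $c_1 - c_2 - \cdots - c_n - c_1$. In an alternating diagram this cycle carries additional rigidity because each arc consists of exactly two semi-arcs meeting at a single over-crossing, giving tight control over how the arc structure transforms under an arc crossing change.

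The base case $d = 1$ is immediate: if $c_i$ and $c_j$ are joined by an arc $\alpha$, then a single arc crossing change on $\alpha$ toggles exactly $\{c_i, c_j\}$. For arc-distance $d \geq 2$, I would first apply the arc crossing change on $\alpha_i$ (toggling $\{c_i, c_{i+1}\}$) to pass from $D$ to a diagram $D_1$, and then reduce the remaining problem to toggling $\{c_{i+1}, c_j\}$ inside $D_1$; the composition yields the desired net toggle $\{c_i, c_j\}$. The arcs of $D_1$ admit an explicit description: the three consecutive arcs $\alpha_{i-1}, \alpha_i, \alpha_{i+1}$ merge into a single arc of $D_1$ passing through $c_i$ and $c_{i+1}$ as over-crossings, while the two arcs of $D$ that passed over $c_i$ and $c_{i+1}$ each split into two new arcs ending at $c_i$ and $c_{i+1}$ respectively. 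This reorganizes the Hamilton cycle of arcs into a new Hamilton cycle on the same set of crossings, in which I would exhibit a path from $c_{i+1}$ to $c_j$ strictly shorter than $d$, setting up the inductive reduction.

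The main obstacle is that $D_1$ is no longer alternating, so the inductive hypothesis as stated for alternating diagrams does not directly apply. To overcome this, I would strengthen the inductive claim so that it covers any knot diagram obtained from an alternating one by a sequence of arc crossing changes, and show that the relevant Hamilton cycle structure on arcs together with the analogous reduction persist under such moves. The non-commutativity of Cericola's arc crossing change, which is the essential feature distinguishing it from Kinuno's version, forces the order of moves to be specified carefully and the evolving arc structure to be tracked at every step. The decisive technical point I expect to need is that in every diagram reached from an alternating $D$ via a sequence of arc crossing changes, we can locate an arc whose endpoints let the ``wavefront'' at $c_{i+1}$ advance toward $c_j$; verifying this will likely require a case analysis based on the relative position of $c_j$ and on how the two arcs of $D$ passing over $c_i$ and $c_{i+1}$ are situated in the original arc cycle.
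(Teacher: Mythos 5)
Your proposal has a genuine gap at exactly the point you flag as ``the main obstacle,'' and the proposed repair is not carried out. The difficulty is real: after the first arc crossing change the diagram is no longer alternating, the cyclic order of crossings by under-passages is itself rearranged (a toggled crossing generally moves to a different position in the cycle, since its under-passage switches to the other strand), and the new arcs incident to $c_{i+1}$ lead to the crossings $c_m$, $c_{m+1}$ adjacent to the arc that formerly passed \emph{over} $c_{i+1}$ --- crossings that need not be any closer to $c_j$ in any metric you have defined. So there is no well-founded decreasing quantity for your induction, and the ``strengthened inductive claim'' (that every pair of crossings is admissible in every diagram reachable from an alternating one by arc crossing changes) is neither precisely formulated nor proven; note that it is a substantially stronger statement than the theorem, and the paper's own examples ($8_{19}$, $8_{21}$ in Figure \ref{qqq}) show that admissibility of all pairs can fail for non-alternating diagrams, so reachability from an alternating diagram would have to do real work here. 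Deferring ``the decisive technical point'' to an unspecified case analysis leaves the core of the argument missing.

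For comparison, the paper circumvents this entirely. It first proves a sufficient condition valid for an arbitrary fixed diagram (Lemma \ref{4.4}): if $x$ and $y$ are joined by an \emph{admissible trail} in the underlying $4$-valent graph --- one that may leave the arc cycle by going straight along overstrands or turning at crossings --- then $x,y$ are admissible; the evolving arc structure is controlled there by a reordering trick that processes blocks of over-crossing turning vertices from the far end first. Alternating-ness is then used only once, and only on the original diagram: the shadow becomes a two-in two-out directed graph, and an induction on the crossing number (resolving a vertex $z\notin\{x,x',x'',y\}$) produces a directed trail from a neighbour of $x$ to $y$ avoiding $x$, which is the required admissible trail. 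If you want to rescue your approach, the most promising route is to replace your Hamilton-cycle wavefront by something like the paper's admissible trails, so that the induction happens on the size of the shadow rather than on a distance in a structure that mutates after every move.
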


The remainder of this paper is organized as follows. In Section \ref{sec2}, we present some concepts and corollaries pertaining to arc crossing change I and II. Section \ref{sec3} presents the proof of Theorem \ref{the2} and that of Theorem \ref{the3} respectively. Finally, we discuss the difference between the arc crossing change introduced by Cericola and that introduced by Kinuno. We provide a sufficient condition for two crossings being arc crossing change admissible and use it to give a proof of Theorem \ref{the4} in Section \ref{sec4}.

\section{Preliminaries}\label{sec2}
Prior to proving the main theorems, we first introduce some essential preliminary concepts for convenience. First, for a given link diagram with $n$ knot components (i.e., $L = \bigcup_{i=1}^n K_i$), we denote the set of all crossings formed by components $K_i$ and $K_j$ as $K_i \cap K_j$. Meanwhile, we denote the sub-link diagram of $L$ as $L_{\geq j}:=\bigcup_{i=j}^nK_i$. Similarly, the analogues of it, say $L_{>j}$ and $L_{\neq j}$ can be defined in a similar way. Next, we introduce some basic facts about link diagrams.

\begin{lemma}
Let $L$ be an $n$-component link diagram, there exists a labeling $K_1, \ldots, K_n$ of these components such that $K_i\cap L_{>i}\neq \varnothing$ for all $1 \leq i \leq n-1$.
\end{lemma}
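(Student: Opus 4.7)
My plan is to build the labeling by a greedy reverse breadth-first search on the auxiliary \emph{component intersection graph} $G$, whose vertices are the knot components $K_1,\ldots,K_n$ and whose edges record the pairs that share at least one crossing, i.e.\ $K_i$ and $K_j$ are joined by an edge exactly when $K_i\cap K_j\neq\varnothing$. The first step is to observe that $G$ is connected: because the link diagram $L$ is assumed to be connected as a planar subset, two components lying in different connected components of $G$ would form two topologically disjoint pieces of $L$, contradicting the hypothesis.

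Once $G$ is known to be connected, I would choose any component and label it $K_n$. Then I would construct the labeling from the top down: having already fixed labels $K_{i+1},\ldots,K_n$, I would pick some still-unlabeled component that shares an edge in $G$ with at least one previously labeled component and call it $K_i$. Such a component always exists, because otherwise the set of already-labeled vertices would be a union of connected components of $G$ while some other vertex remains outside, contradicting the connectedness of $G$. Proceeding in this way exhausts all $n$ components.

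By construction, for every $i\leq n-1$ the component $K_i$ was selected because it had a crossing with some $K_j$ whose label $j$ was already assigned, and all such labels satisfy $j>i$. This is exactly the statement $K_i\cap L_{>i}\neq\varnothing$, proving the lemma.

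The argument is essentially routine; the only point demanding a sentence of care is the translation between topological connectedness of the link diagram and combinatorial connectedness of $G$, which I expect to be the only conceptual obstacle and which I would dispatch by the tautological observation above. The rest is a standard reverse-BFS labeling on a connected graph.
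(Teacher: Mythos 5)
Your proof is correct and is essentially the same argument as the paper's: both reduce the lemma to the connectedness of the component-intersection structure of $L$ and then extract a suitable ordering, the paper by induction (peeling off one component and assigning it the smallest label) and you by a reverse BFS assigning labels from $n$ downward. The two are mirror images of one another, and if anything your explicit adjacency condition at each step is slightly cleaner than the paper's appeal to selecting $k$ components "that are connected."
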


We call such a labeling a \emph{felicitous labeling} of $L$. Note that felicitous labeling is not unique in general.

\begin{proof}
For a 2-component link diagram $L$, it is clear that labeling the two components as $K_1$ and $K_2$ satisfies the condition of felicitous labeling, since $L=K_1\cup K_2$ is connected.

Next, we proceed with induction on the component number $n$. Suppose the felicitous labeling exists for all $k$-component link diagram, now we consider a $(k+1)$-component link diagram $L$. According to the induction hypothesis, we can randomly select $k$ components that are connected and assign them a felicitous labeling $K_1', K_2', \ldots, K_k'$. We then label the remaining component as $K_1$ and relabel these $k$ components as $K_2, K_3, \ldots, K_{k+1}$. Since $K_1\cap\bigcup_{i=2}^{k+1}K_i\neq\emptyset$, now the labeling $K_1, K_2, \ldots, K_k, K_{k+1}$ is a felicitous labeling for $L$.
\end{proof}

\begin{definition}
Let $L = \bigcup_{i=1}^n K_i$ be a link diagram. For a given component $K_j$, we define two sets of crossings, called the \emph{over-crossing set} and \emph{under-crossing set} of $K_j$, as follows:
    $$
    \begin{aligned}
        &\mathcal{O}(K_j) := \{ c \in K_j\cap L_{\ne j}\mid K_j \text{ is the overstrand at } c \}, \\
        &\mathcal{U}(K_j) := \{ c \in K_j\cap L_{\ne j}\mid K_j \text{ is the understrand at } c \}.
    \end{aligned}
    $$
    It follows that the set of all crossings of $K_j$ excluding self-crossings can be expressed as the disjoint union of the over-crossing set and the under-crossing set, i.e., $K_j\cap L_{\ne j} = \mathcal{O}(K_j) \sqcup \mathcal{U}(K_j)$.
\end{definition}

\begin{figure}[H]
\centering
\includegraphics[width=0.6\linewidth]{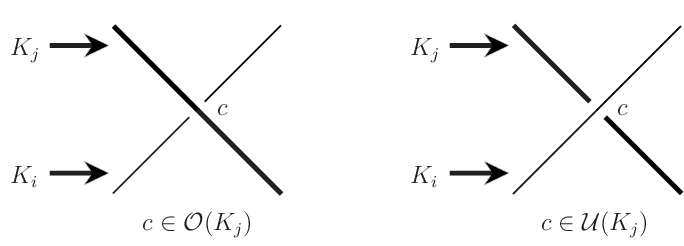}
\caption{A diagrammatic sketch of $\mathcal{O}(K_j)$ and $\mathcal{U}(K_j)$}
\label{g}
\end{figure}

Following the definition above, we introduce a notation that will be used throughout the remainder of this paper. For two distinct components \( K_i \) and \( K_j \) of a link diagram \( L \), we write \( K_i \precsim K_j \) if \( K_i \cap K_j \subseteq \mathcal{O}(K_j) \) (equivalently, \( K_i \cap K_j \subseteq \mathcal{U}(K_i) \)). Specifically, when the additional condition \( K_i \cap K_j \neq \varnothing \) is satisfied, we write \( K_i \prec K_j \).

\begin{definition}
For a link diagram \( L = \bigcup_{i=1}^n K_i \) with a felicitous labeling, a component \( K_j \) is called a \emph{simple component} if it has no self-crossings and satisfies \( L_{<j} \precsim K_j, K_j \precsim L_{>j} \).
\end{definition}

Obviously, a simple component represents an unknot unlinked with other components. Furthermore, we generalize the notion of ascending diagram from knot diagrams to link diagrams.

\begin{definition}
Let $L=\bigcup_{i=1}^n K_i$ denote a link diagram with a felicitous labeling. We say $L$ is an \emph{ascending diagram} if each component $K_i$ is an ascending diagram and there exists a permutation $\sigma\in S_n$ such that $$K_{\sigma(1)}\precsim K_{\sigma(2)}\precsim\cdots\precsim K_{\sigma(n)}.$$
\end{definition}

Figure \ref{f} below shows how to transform a link diagram of $3_1\cup4_1$ into an ascending diagram via arc crossing changes on the bold arcs.

\begin{figure}[H]
\centering
\includegraphics[width=0.7\linewidth]{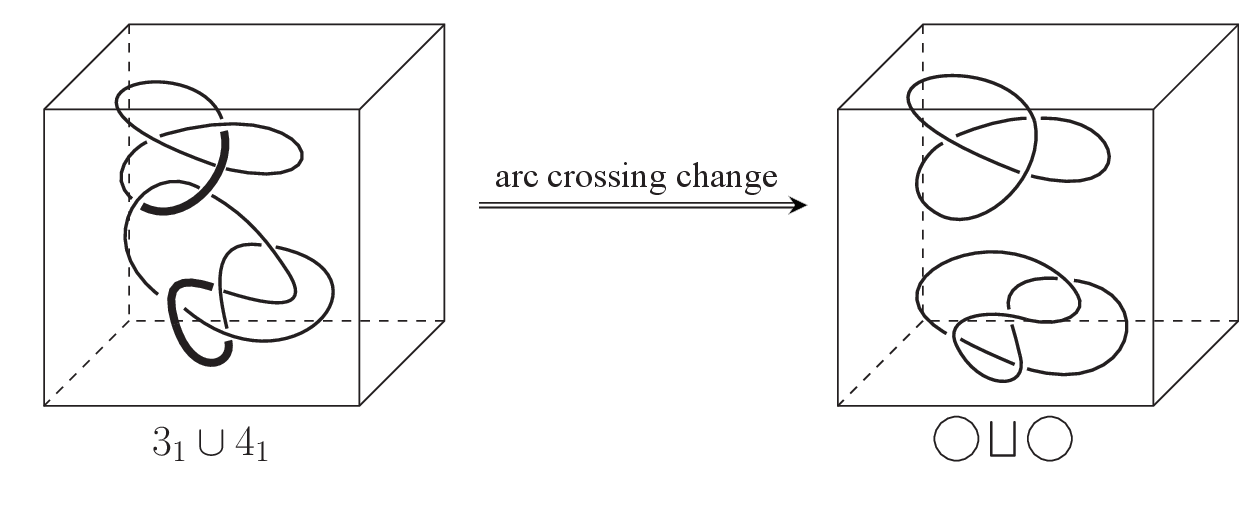}
\caption{Transform a link diagram of $3_1\cup4_1$ into an ascending diagram}
\label{f}
\end{figure}

\section{The proof of the main theorems}\label{sec3}
In this section, we give the proofs of Theorem \ref{the2} and Theorem \ref{the3}, respectively.
\subsection{The Proof of Theorem \ref{the2}}
\begin{proof}
First, we choose a labeling (not necessary felicitous) for the link diagram and denote it as $L=\bigcup_{i=1}^nK_i$. For a component \( K_j \) of the link diagram \( L \), let \( c \) be a crossing in \( K_i\cap K_j \) with \( i \neq j \). If \(c\in\mathcal{U}(K_j)\), applying arc crossing change I to one of the two understrands (it is possible that these two understrands coincide) switches the crossing $c$. Otherwise, if \(c\in\mathcal{U}(K_i)\), applying arc crossing change I to one of the two understrands converts $c$ into an overcrossing, as shown in Figure \ref{ee}. In either case, the crossing $c$ can be switched.

\begin{figure}[H]
\centering
\includegraphics[width=0.8\linewidth]{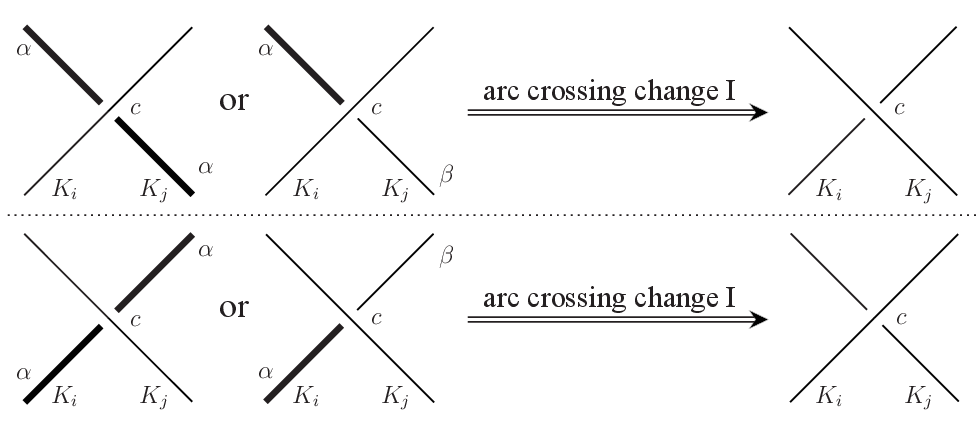}
\caption{Applying arc crossing change I to the bold arc}
\label{ee}
\end{figure}

Consider the component \( K_1 \), according to the discussion above, via a sequence of arc crossing changes I, all crossings of $\mathcal{U}(K_1)$ can be converted into overcrossings. Thus now we have $L_{\geq2}\precsim K_1$. By Theorem \ref{the1}, now \(K_1\) can be transformed into an ascending diagram via a sequence of arc crossing changes I. After this sequence of operations is completed, we still have $L_{\geq2}\prec K_1$ since only some self-crossings of $K_1$ are changed. Next, let us consider the component \(K_2\) and repeat the operations performed on \( K_1 \) for \( K_2 \). Then we have $L_{\geq3}\precsim K_1\precsim K_2$ and $K_2$ becomes an ascending diagram. Repeat this process for $K_3, \ldots, K_n$ one by one, finally we obtain a link diagram satisfying $K_1\precsim \ldots \precsim K_n$ and each $K_i$ $(1\leq i\leq n)$ is an ascending diagram. This is an ascending link diagram and the result follows.
\end{proof}

\subsection{The Proof of Theorem \ref{the3}}
Compared with arc crossing change I, arc crossing change II is not an unknotting operation in general. For instance, it is not an unknotting operation for the standard diagram of Hopf link, since this diagram is preserved under arc crossing change II. According to the definition of arc crossing change II, if each component of a link diagram has self-crossings, then there is no difference between these two types of arc crossing change. Arc crossing change II differs from arc crossing change I only when there exists a component $K_i$ that satisfies $K_i$ has no self-crossing and $|\mathcal{U}(K_i)|=1$. This thus inspires us to introduce the following concept.

\begin{definition}
Let \( L \) be an \( n \)-component link diagram. A component \( K_i \) that contains no self-crossings is referred to as a \emph{type C component} if $|\mathcal{U}(K_i)|=1$.
\end{definition}

\begin{figure}[H]
\centering
\includegraphics[width=0.7\linewidth]{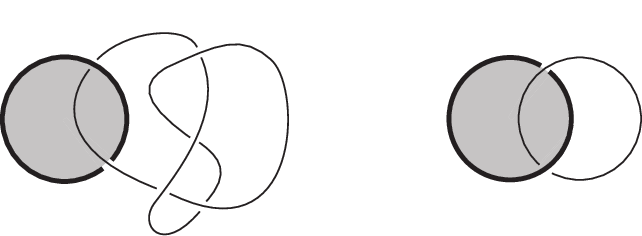}
\caption{Two examples of type C component}
\end{figure}

\begin{remark}
If there exists a knot diagram $K$ which has exactly one self-crossing and $\mathcal{U}(K)=\emptyset$, then applying arc crossing change I on $K$ switches this crossing but applying arc crossing change II preserves it. However, this is not essential since in either case $K$ represents the unknot.
\end{remark}

Furthermore, to facilitate the proof of Theorem \ref{the3}, we need the following lemmas.

\begin{lemma}\label{pro1}
Let $L$ denote an $n$-component link diagram. If a component $K_i$ of $L$ has no self-crossings, then the number of arcs of it equals the cardinality of $\mathcal{U}(K_i)$.
\end{lemma}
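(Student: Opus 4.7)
The plan is to reduce the statement to a direct counting argument, using the standard definition that an arc of a link diagram is a maximal connected piece of the diagram running from one undercrossing to the next (passing over any overcrossings it meets along the way). Under this definition, the arcs of a single component $K_i$ are exactly the connected pieces one obtains by cutting the underlying circle of $K_i$ at each of its undercrossing points.

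First I would identify the undercrossings of $K_i$. Since $K_i$ has no self-crossings by hypothesis, every crossing on $K_i$ lies in $K_i \cap L_{\neq i}$, and the subset at which $K_i$ is the understrand is, by definition, precisely $\mathcal{U}(K_i)$. Overcrossings do not interrupt an arc, so they play no role in the count. It therefore remains to count the connected components obtained from the topological circle $K_i$ by deleting the $|\mathcal{U}(K_i)|$ undercrossing points. The elementary fact that a circle minus $k \geq 1$ points has exactly $k$ connected components gives immediately that the number of arcs equals $|\mathcal{U}(K_i)|$. A clean way to write this up is to fix an orientation of $K_i$ and exhibit an explicit bijection between arcs and elements of $\mathcal{U}(K_i)$ by sending each arc to its terminal undercrossing.

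The only possible obstacle is the degenerate case $\mathcal{U}(K_i) = \varnothing$, where $K_i$ never dives under any other component. In this situation $K_i$ is an undisturbed loop and, following the convention implicit in the lemma statement, contributes no arcs (since the notion of arc used in this paper requires endpoints at undercrossings). This is the conceptual issue one must acknowledge, but the formula is still consistent with it. Beyond addressing this boundary case, the argument is a straightforward bijection, so no deeper input is needed.
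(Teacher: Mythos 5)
Your argument is correct and is essentially the paper's own proof, which likewise just observes that arcs are the pieces between consecutive undercrossings and that, absent self-crossings, the only undercrossings on $K_i$ are those in $\mathcal{U}(K_i)$; you simply spell out the circle-cutting bijection in more detail and flag the degenerate case $\mathcal{U}(K_i)=\varnothing$, which the paper leaves implicit and which does not arise in its applications of the lemma.
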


\begin{proof}
By definition, an arc is a part of $K_i$ between two undercrossing points. Since $K_i$ has no self-crossing point, it follows that the number of arcs of $K_i$ is equal to the cardinality of $\mathcal{U}(K_i)$.
\end{proof}

\begin{lemma}\label{pro2}
Let $L = \bigcup_{i=1}^n K_i$ be a link diagram. For any component $K_i$ $(1\leq i\leq n)$, the cardinality of $\mathcal{U}(K_i)$ and the sum of linking numbers of $K_i$ with other components have the same parity, i.e.
\[
|\mathcal{U}(K_i)|=\sum_{j\neq i}\text{lk}(K_i, K_j) \pmod{2}.
\]
\end{lemma}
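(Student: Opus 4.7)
The plan is to reduce the identity to the standard formula for the linking number and then take parities. Specifically, I would start from the fact that for any two distinct components
\[
\mathrm{lk}(K_i,K_j)=\sum_{c\in\mathcal{U}(K_i)\cap K_j}\mathrm{sgn}(c),
\]
which is just one half of $\sum_{c\in K_i\cap K_j}\mathrm{sgn}(c)$ separated onto the strand where $K_j$ is the overstrand. Since every sign is $\pm1$, reducing modulo $2$ gives
\[
\mathrm{lk}(K_i,K_j)\equiv |\mathcal{U}(K_i)\cap K_j|\pmod{2}.
\]

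Next, I would use the fact that $\mathcal{U}(K_i)\subseteq K_i\cap L_{\ne i}$ by definition, so it splits as the disjoint union
\[
\mathcal{U}(K_i)=\bigsqcup_{j\neq i}\bigl(\mathcal{U}(K_i)\cap K_j\bigr).
\]
Summing the mod-$2$ congruence above over all $j\neq i$ and using this disjoint-union decomposition immediately yields
\[
\sum_{j\neq i}\mathrm{lk}(K_i,K_j)\equiv |\mathcal{U}(K_i)|\pmod{2},
\]
which is the claimed identity.

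There is no real obstacle here; the statement is essentially a parity restatement of the standard sign-count formula for linking number. The only point that warrants care is the one-strand vs.\ both-strand conventions when writing $\mathrm{lk}(K_i,K_j)$: I would make explicit that selecting only the crossings in $\mathcal{U}(K_i)\cap K_j$ (equivalently $\mathcal{O}(K_j)\cap K_i$) still gives the full linking number, not half of it, so that the parity bookkeeping at the end is unambiguous.
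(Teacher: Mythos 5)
Your proposal is correct and matches the paper's (one-line) proof in substance: the paper simply asserts that the lemma follows directly from the definitions of linking number and $\mathcal{U}(K_i)$, and your argument fills in exactly those details, including the one point that genuinely warrants care, namely that summing signs over only the crossings where $K_j$ passes over $K_i$ already gives the full linking number rather than half of it.
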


\begin{proof}
It follows directly from the definition of linking number and that of $\mathcal{U}(K_i)$.
\end{proof}

\begin{lemma}\label{lem2}
For a link diagram $L$, if each component has no self-crossings, then the parity of the total linking number of $L$ is invariant under arc crossing changes II.
\end{lemma}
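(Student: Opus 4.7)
The plan is to show that a single arc crossing change II alters the total linking number $t(L)$ by an even integer, and then iterate this across an arbitrary sequence of such moves. The key opening observation is that, since every component is assumed to be self-crossing free, no arc of $L$ has its two endpoint undercrossings coinciding at one and the same crossing point. In our setting arc crossing change II therefore coincides with arc crossing change I, and each elementary move flips exactly two distinct crossings.

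Next, I would fix the component $K_i$ and the arc $\alpha \subseteq K_i$ on which the operation is performed. Its two endpoints are distinct crossings $c_1, c_2 \in \mathcal{U}(K_i)$, and because $K_i$ has no self-crossings, each $c_k$ is a crossing of $K_i$ with some other component $K_{j_k}$ (possibly $j_1 = j_2$). The move switches exactly $c_1$ and $c_2$; at each of these crossings the local sign is flipped, so the corresponding linking number $\mathrm{lk}(K_i, K_{j_k})$ changes by $\pm 1$. Summing the two contributions, the net change in $t(L)$ is a sum of two $\pm 1$'s, which is necessarily even, so the parity of $t(L)$ is preserved by a single move.

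The only mild case split is whether the two flipped crossings lie in the same off-diagonal pair or in two different pairs: if $j_1 = j_2$, a single linking number changes by $0$ or $\pm 2$; if $j_1 \neq j_2$, two distinct linking numbers each change by $\pm 1$. In both cases the total change is even, so parity is preserved. Iterating gives the lemma. I do not foresee a genuine obstacle; the only subtlety is that the no-self-crossings hypothesis is doing real work, since it is precisely what rules out the degenerate arc whose two endpoints meet at one self-crossing, which is the unique configuration where type II differs from type I and in which the parity analysis would have to be reconsidered.
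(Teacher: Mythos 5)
Your key opening claim is false: the hypothesis that no component has a self-crossing does \emph{not} rule out an arc whose two endpoint undercrossings coincide at a single crossing point. That degenerate configuration occurs precisely when some component $K_i$ has no self-crossings and $|\mathcal{U}(K_i)|=1$ (a ``type C component'' in the paper's terminology): then $K_i$ consists of a single arc whose two ends meet at one and the same crossing, and that crossing is a \emph{mixed} crossing with another component, not a self-crossing. The standard diagram of the Hopf link is the basic example --- both components are self-crossing free, yet every arc has coincident endpoints, and arc crossing change II fixes the diagram entirely (which is exactly why type II fails to be an unknotting operation there). So your assertion that ``in our setting arc crossing change II coincides with arc crossing change I'' is wrong, and the sentence claiming the no-self-crossings hypothesis ``rules out the degenerate arc'' misidentifies what that hypothesis buys you.

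Fortunately the gap is easy to close, and the paper's proof does exactly this by splitting into two cases. If the two endpoint crossings $c_1,c_2$ of the chosen arc are distinct, your sign computation is correct and matches the paper: two crossings in $K_i\cap K_j$ and $K_i\cap K_k$ (possibly $j=k$) are flipped, each changing a linking number by $\pm1$, so $t(L)$ changes by an even amount. If $c_1=c_2$, i.e.\ the arc belongs to a type C component, then by definition arc crossing change II leaves the diagram unchanged, so the parity is trivially preserved. You should add this second case explicitly rather than argue it cannot arise.
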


\begin{proof}
Since each component of $L$ has no self-crossings, applying arc crossing change II to an arc of $K_i$ alters exactly two crossings, say $c_1 \in K_i\cap K_j$ and $c_2 \in K_i\cap K_k$ where $j, k\neq i$. If $c_1\neq c_2$, this operation affects only the parities of linking numbers $\text{lk}(K_i, K_j)$ and $\text{lk}(K_i, K_k)$. Note that it is possible $K_j=K_k$. If $c_1=c_2$, i.e. $K_i$ is a type C component, then the link diagram $L$ is preserved. In either case the parity of the total linking number is preserved.
\end{proof}

Now we turn to the proof of Theorem \ref{the3}.

\begin{proof}
Let $L=\bigcup_{i=1}^nK_i$ denote a link diagram with a felicitous labeling. 

\begin{enumerate}
\item First, we consider the case that each component of $L$ has no self-crossings. 
\begin{enumerate}
\item $n=2$ : If $K_1\prec K_2$, then both $K_1, K_2$ are simple components and $L$ represents an unlink. Consider the case $\mathcal{O}(K_1)=\mathcal{U}(K_2)\neq\emptyset$, if lk$(K_1, K_2)$ is even, then $K_2$ is not a type C component and applying arc crossing change II to any arc of $K_2$ reduces $|\mathcal{O}(K_1)|$ by two. Repeat this operation several times one turns $|\mathcal{O}(K_1)|$ into zero and turns $L$ into the unlink. If lk$(K_1, K_2)$ is odd, according to Lemma \ref{pro2}, we have $|\mathcal{U}(K_2)|=|\mathcal{O}(K_1)|$ is odd. Since applying arc crossing change II to an arc of $K_2$ reduces $|\mathcal{O}(K_1)|$ by two, finally we obtain a new link diagram with $|\mathcal{O}(K_1)|=|\mathcal{U}(K_2)|=1$ and thus $K_2$ becomes a type C component. It follows that now $L$ represents the Hopf link.
\item $n\geq3$ : Consider the first component \( K_1 \), if \( K_1 \prec L_{>1} \), then \( K_1 \) is already a simple component. If not, this implies that some crossings in \( K_1 \cap L_{>1} \) belong to \(\mathcal{O}(K_1)\). Then, we claim that we can find a sequence of arc crossing changes II applied to \( L_{>1} \) to convert all the aforementioned overcrossings into undercrossings. For a component $K_i$ $(i\geq2)$ with \((K_1\cap K_i)\cap\mathcal{O}(K_1)\neq\emptyset\), if \( K_i \) is not a type C component, then applying arc crossing change II to the arc of \( K_i \) that is incident to the target over-crossing of \( K_1 \) switches the crossing. Otherwise, if \( K_i \) is a type C component, since \( n\geq 3 \), there exists another component \( K_j \) such that \( K_i \cap K_j \neq\emptyset\). Notice that $K_i$ has only one undercrossing since it is a type C component, it follows that $K_i\cap K_j\subseteq\mathcal{O}(K_i)$. Now we select a crossing \( c\in K_i \cap K_j \), and apply an arc crossing change II to one understrand of \( c\). Since \( K_i \) has exactly one arc, this operation will switch \( c \), and after the operation \(K_i\) is no longer a type C component. Notice that this operation does not increase $|\mathcal{O}(K_1)|$. Thus, repeating the aforementioned operations yields the result \( K_1 \prec L_{>1} \), i.e., the component \( K_1 \) becomes a simple component. Similarly, the remaining components \( K_2, K_3, \ldots, K_{n-2} \) can be transformed into simple components sequentially. During this process, since $L_{<j} \precsim L_{>j}$, arc crossing changes II applied to arcs of \(L_{>j}\) that reduce $|(K_j\cap L_{>j})\cap\mathcal{O}(K_j)|$ do not alter any crossing of \( L_{<j} \). Since \( K_1, \cdots, K_{n-2} \) are all simple components, it follows that $\text{lk}(K_i, K_j)=0$ for any $1\leq i\leq n-2$ and $j\neq i$. We continue our discussion by looking at two scenarios.
\begin{enumerate}
\item If the total linking number is even, since only the linking number between $K_{n-1}$ and $K_n$ has nontrivial contribution to the total linking number, together with Lemma \ref{lem2}, it follows that $\text{lk}(K_{n-1}, K_n)=0 \pmod{2}$. According to Lemma \ref{pro1} and Lemma \ref{pro2}, this implies that the number of arcs of \( K_n \) is even. Just like the 2-component case above, finally all components of $L$ can be converted into simple components and hence the link diagram represents an unlink. 
\item If the total linking number is odd, then \(\text{lk}(K_{n-1}, K_n)=1 \pmod{2} \) and by Lemma \ref{pro1} and Lemma \ref{pro2}, the number of arcs of \( K_n \) is odd. Thus, applying arc crossing changes II to \( K_n \) can transform it into a type C component  where \( L_{<n-1} \precsim K_{n-1}\cup K_n \), and \(K_{n-1}\cup K_n \) represents the Hopf link.
\end{enumerate}
\end{enumerate}
The proof for the case (1) is finished.
\item Next, we consider the case that at least one component of \( L \) has self-crossings. 
\begin{enumerate}
\item If every component has self-crossings, then there are no type C components. As we mentioned before, in this case there is no essential difference between the two types of arc crossing change. The result follows directly from Theorem \ref{the2}.
\item For the weaker scenario, if some (but not all) components have self-crossings whereas the rest have no self-crossings, we can decompose \( L \) into the disjoint union of two of sub-links, denoted as $L=\mathcal{L}_c\cup\mathcal{L}_s$. Here, $\mathcal{L}_c$ consists of all the knot components that have no self-crossings and $\mathcal{L}_s$ consists of all other components. For each knot component in $\mathcal{L}_c$, let us consider the maximal connected component (as a link diagram on the plane) containing this component, then we can write $\mathcal{L}_c$ as the disjoint union of maximal connected components, say $\mathcal{L}_c=L_{c, 1}\cup L_{c, 2}\cup\cdots\cup L_{c, p}$. In other words, for each $1\leq i\leq p$, every knot component of $L_{c, i}$ has no self-crossings, as a link diagram on the plane $L_{c, i}$ is connected, and $L_{c, i}\cap L_{c, j}=\emptyset$ if $i\neq j$. Similarly, we write $\mathcal{L}_s=L_{s, 1}\cup L_{s, 2}\cup\cdots\cup L_{s, q}$.

Now what we want to do is to make $L_{s, i}\precsim L_{c, j}$ for all $L_{s, i}\subseteq\mathcal{L}_s$ and all $L_{c, j}\subseteq\mathcal{L}_c$ via a sequence of arc crossing changes II, except some very special cases. Choose a knot diagram $K\subseteq L_{s, i}$ and a knot diagram $K'\subseteq L_{c, j}$ such that $(K\cap K')\cap\mathcal{O}(K)\neq\emptyset$, now there are three possibilities:
\begin{enumerate}
\item $K'$ is not a type C component. In this case, for any $c\in(K\cap K')\cap\mathcal{O}(K)$ we can apply arc crossing change II to one of the understrand of $c$ to switch $c$. Then $|(K\cap K')\cap\mathcal{O}(K)|$ is reduced by 1, or 2 if the other endpoint of this understrand also belongs to $(K\cap K')\cap\mathcal{O}(K)$. Repeat this process until all the crossings of $(K\cap K')\cap\mathcal{O}(K)$ have been switched and finally we have $K\precsim K'$.
\item $K'$ is a type C component but not a maximal connected component. Since $K'$ is not a maximal connected component, we can find another knot diagram $K''\subseteq L_{c, j}$ such that $K'\cap K''\neq\emptyset$. On the other hand, since $K'$ is a type C component and $(K\cap K')\cap\mathcal{O}(K)\neq\emptyset$, it follows that $K'\cap K''\subseteq\mathcal{U}(K'')$. Choose a crossing $c\in K'\cap K''$ and apply arc crossing change II to one of the understrand of $c$, then $c$ will be switched and $K'$ is not a type C component anymore. Then we can use the method above to handle $K'$. Notice that after step (i), the other endpoint of the chosen understrand is not the undercrossing of any component of $\mathcal{L}_s$, therefore it does not affect the components that have been handled in step (i).
\item $K'$ is a type C component and also a maximal connected component. In other words, $K'=L_{c, j}$ which represents an unknot. In this case, we do nothing.
\end{enumerate}

Now if $L_{c, j}$ is not a single component of type C as case (iii) mentioned above, we have $\mathcal{L}_s\precsim L_{c, j}$. Next we want to convert each such $L_{c, j}$ into an unlink via arc crossing changes II. Note that a maximal connected component which consists of just one component of type C represents the unknot, therefore we do not need to do anything on it. For other cases, since each component of $L_{c, j}$ has no self-crossings, there are two possibilities as we have met in case (1) above.
\begin{enumerate}
\item If the total linking number of $L_{c, j}$ is even, similar to case (1) we can convert $L_{c, j}$ into an unlink via arc crossing changes II.
\item If the total linking number of $L_{c, j}$ is odd, we claim that we can also convert $L_{c, j}$ into an unlink via the help of some component of $\mathcal{L}_s$. First, we can use the method in case (1) to convert $L_{c, j}$ into a link diagram representing the disjoint union of some unknots and a Hopf link via a sequence of arc crossing changes II. Without loss of generality, we choose a felicitous labeling of $L_{c, j}$ as $L_{c, j}=K_1\cup\cdots\cup K_u$ and now we have $K_1\precsim K_2\precsim\cdots\precsim K_{u-1}\cup K_u$, where $K_i$ $(1\leq i\leq u-2)$ is a simple component, $|\mathcal{O}(K_1)|=|\mathcal{U}(K_2)|=1$ and therefore $K_{u-1}\cup K_u$ represents the Hopf link.

Denote the single-point set $\mathcal{U}(K_u)=\{c\}$, obviously switching $c$ turns $K_{u-1}\cup K_u$ into a 2-component trivial link. Since the link diagram $L$ is connected and $L_{c, i}\cap L_{c, j}=\emptyset$ if $i\neq j$, it follows that there exists a component $J\in \mathcal{L}_s$ such that $J\cap L_{c, j}\neq\emptyset$. Let us assume that $J\cap K_{u-v}\neq\emptyset$ for some $v<u$ and choose a crossing $c''\in J\cap K_{u-v}$. Choose an understrand of $c''$, say $a_1$, and denote the other endpoint of $a_1$ by $c'$. Now let us walk along one side of the overstrand of $c''$, say $a_2$, until we meet an undercrossing. The existence of this undercrossing is guaranteed by the fact that the labeling $L_{c, j}=K_1\cup\cdots\cup K_u$ is felicitous, which means that $K_{u-v}\cap(K_{u-v+1}\cup\cdots\cup K_u)\neq\emptyset$. We might as well assume that the component we encounter is exactly $K_{u-v+1}$. Then we choose one side of the overstrand of this crossing and walk along $a_3$ until we meet the next undercrossing. Repeat this process until we meet the component $K_u$. Since $|\mathcal{U}(K_u)|=1$, $K_u$ has just one arc. Choose one side of the overstrand of the crossing where we meet $K_u$ and walk along $a_{v+2}$ until we arrive at the crossing $c$. See Figure \ref{HF} for a simple example.

\begin{figure}[H]
\centering
\includegraphics[width=0.8\linewidth]{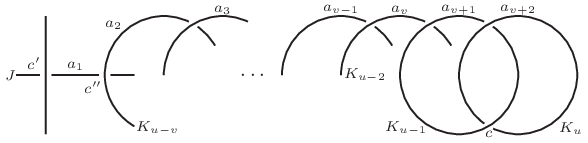}
\caption{Unlink the Hopf link}
\label{HF}
\end{figure}

Now let us apply arc crossing changes II to the arcs $a_1, a_2, \ldots, a_{v+2}$ serially. Note that $a_1$ is an arc of $L$, but $a_2$ is not. However, after applying arc crossing change II to $a_1$, $a_2$ becomes an arc in the new link diagram. Finally, only the two crossing points $c$ and $c'$ are changed and now the link diagram $L_{c, j}$ represents an unlink. Note that it is possible $J\precsim L_{c, j}$ does not hold anymore, since the overstrand of $c'$ might be of some component of $L_{c, j}$. After switching $c'$, it becomes an undercrossing of this component.
\end{enumerate}

Repeat the process above for all $L_{c, j}$ $(1\leq j\leq p)$ except those consist of only one knot component of type C, now every maximal connected component of $\mathcal{L}_c$ represents an unlink. Finally, recall that each knot component of $\mathcal{L}_s$ has self-crossings, therefore none of it is a type C component. Now for any crossing point $c\in K\cap K'$, where $K\subseteq\mathcal{L}_c, K'\subseteq\mathcal{L}_s$, if $c\in\mathcal{O}(K)$, then it can be switched by applying arc crossing change II to an understrand of $c$. In the end, we have $L_{c, j}\precsim L_{s, i}$ for all $L_{c, j}\subseteq\mathcal{L}_c$ and all $L_{s, i}\subseteq\mathcal{L}_s$. Note that after these operations, each $L_{c, j}$ still represents an unlink. Since each component of $L_{s, i}$ has self-crossings, as we have discussed before, $L_{s, i}$ can be converted to the unlink via a sequence of arc crossing changes II. In the end, we obtain a link diagram which represents the unlink from $L$ via finitely many arc crossing changes II.
\end{enumerate}
The proof for the case (2) is finished.
\end{enumerate}
\end{proof}

\section{Arc crossing change on alternating knot diagrams}\label{sec4}
\subsection{Directed graph derived from a knot shadow}
Given a knot diagram $K$, recall that a \emph{semi-arc} is a piece of $K$ between two adjacent crossing points, and an \emph{arc} is a piece of $K$ between two undercrossing points with no undercrossing points in between. Arc crossing change of Kinuno's version is taken on semi-arcs, while Cericola's version is applied on arcs. Obviously, Cericola's arc crossing change on an arc can be realized by that of Kinuno on the semi-arcs of this arc. In \cite{Cer2024}, Cericola asked whether the converse still holds, i.e. whether each Kinuno's arc crossing change can be realized by a sequence of arc crossing changes introduced by Cericola? Next, we will explain that this matter is generally not valid.

For a knot diagram $K$ with $n$ crossings, if we ignore the over/undercrossing information then we obtain a 4-valent plane graph $G$, called the \emph{shadow} of $K$. Obviously, by assigning an over/undercrossing  to each vertex of $G$, we can derive $2^n$ different knot diagrams from the graph $G$. We can establish a one-to-one correspondence between $\{0, 1\}^n$ and these knot diagrams as follows. Consider the checkerboard coloring of $K$, which is an assignment of the color black or white to each region of $K$ such that adjacent regions receive different colors and the unbounded region is assigned with white. We assign $0$ to a crossing point of $K$ if the region swept when the overstrand rotates counterclockwise to the understrand is black. Otherwise, we assign $1$ to it. Then for a fixed knot shadow, each element of $\{0, 1\}^n$ corresponds to a knot diagram. In particular, $(0, \ldots, 0)$ and $(1, \ldots, 1)$ are alternating and mirrors of each other.

For a given knot shadow $G$, in order to have a better understanding of the effect of arc crossing changes on knot diagrams, it is convenient to introduce an associated directed graph $A_G$. A similar construction of undirected graph for region crossing change can be found in \cite{CCXZ2022}. The graph $A_G$ has $2^n$ vertices, each vertex corresponds to an element of $\{0, 1\}^n$, hence also a knot diagram of $G$. There is an edge directed from vertex $v$ to vertex $w$ if and only if the knot diagram corresponding to $w$ can be obtained from the knot diagram corresponding to $v$ by one arc crossing change (of Cericola's version).

\begin{example}\label{4.1}
Let $G$ denote the shadow of the standard diagram of the figure-8 knot. The associated directed graph $A_G$, which has two connected components, is depicted in Figure \ref{figure-8}. Here both $0000$ and $1111$ represent the figure-8 knot, since it is amphichiral. Note that the knot diagram $0000$ can be obtained from $1100$ by an arc crossing change introduced by Kinuno. However, since $0000$ has indegree $0$, the knot diagram $0000$ cannot be obtained from $1100$ via Cericola's arc crossing changes. Therefore, the arc crossing change introduced by Kinuno and that introduced by Cericola are not equivalent.

\begin{figure}[h]
\centering
\includegraphics[width=0.75\linewidth]{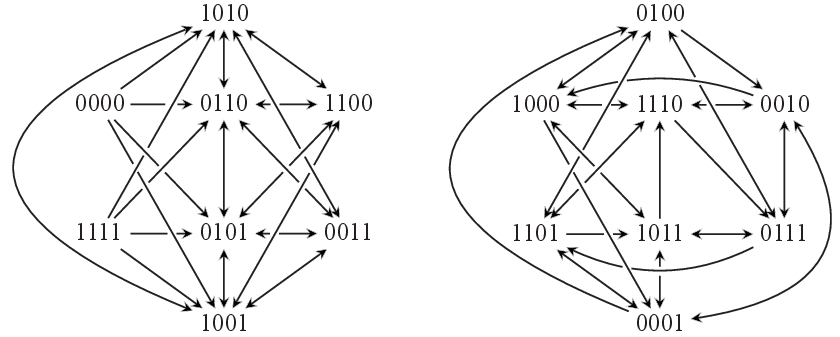}
\caption{The directed graph of the shadow of figure-8 knot}
\label{figure-8}
\end{figure}
\end{example}

Here we list some elementary facts about $A_G$. The proofs are left to the reader.

\begin{proposition}
Let $G$ be the shadow of a knot diagram with $n$ crossing points and $A_G$ the associated directed graph, then the following statements hold true.
\begin{enumerate}
\item Each vertex of $A_G$ has outdegree $n$;
\item For any vertex $v$ of $A_G$, denote the arcs of the corresponding knot diagram by $a_1, \ldots, a_n$. For an arc $a_i$ $(1\leq i\leq n)$, if there are $m_i$ crossing points having $a_i$ as their overstrands, then the indegree of $v$ equals to $\sum_{i=1}^n\binom{m_i}{2}$.
\item $A_G$ has at least two connected components.
\end{enumerate}
\end{proposition}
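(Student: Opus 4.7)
The plan is to handle the three parts in order, with part (2) carrying the combinatorial weight; parts (1) and (3) follow relatively directly.

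For part (1), I would count arcs. In a knot diagram with $n$ crossings, each crossing contributes exactly one undercrossing event as we traverse the knot, so the knot is cut into $n$ arcs. An arc crossing change on the arc with endpoint crossings $\{c_i, c_{i+1}\}$ (indices taken cyclically in the sequence of undercrossing events) produces the vertex obtained by toggling the bits labelled $c_i$ and $c_{i+1}$. The key observation is that for $n \geq 3$ these $n$ unordered pairs are pairwise distinct: if $\{c_i, c_{i+1}\} = \{c_j, c_{j+1}\}$ with $i \ne j$, the fact that $c_1, \ldots, c_n$ are pairwise distinct forces $i \equiv j+1$ and $j \equiv i+1 \pmod{n}$, hence $n \leq 2$. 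So the $n$ arc crossing changes yield $n$ distinct out-neighbors.

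The central step is part (2): I would set up a bijection between the in-neighbors $u$ of $v$ and the unordered pairs of overcrossings of $v$ lying on a common arc $a_i$. Given an edge $u \to v$ realized by an arc crossing change on arc $a$ of $u$ with endpoint crossings $c$ and $c'$, only $c$ and $c'$ change type from $u$ to $v$, so in $v$ both $c$ and $c'$ are overcrossings and the interior of $a$ contains no undercrossings of $v$. Therefore the segment $a$ sits inside a single arc $a_i$ of $v$, and $c, c'$ are two of its $m_i$ overcrossings. Conversely, given any two overcrossings on $a_i$, toggling the corresponding bits produces a diagram $u$ in which the subsegment of $a_i$ between them becomes an arc witnessing $u \to v$. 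Since the overcrossing events of $v$ are partitioned among its arcs --- each crossing's overstrand lies in exactly one arc --- and distinct pairs of toggled bits yield distinct pre-images, summing $\binom{m_i}{2}$ over $i$ counts each in-neighbor exactly once.

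For part (3), I would introduce a mod-$2$ parity invariant: $\phi(v) := \sum_{c} v_c \pmod{2}$. Since an arc crossing change toggles exactly two bits of the vertex label (for $n \geq 2$, the two endpoints of any arc in a knot diagram are distinct crossings), $\phi$ is constant along every edge of $A_G$, hence on every weakly connected component. Both values of $\phi$ are attained on $\{0, 1\}^n$, so $A_G$ has at least two weakly connected components.

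I expect the bijection in part (2) to require the most care, particularly in verifying that the candidate pre-image produced from a pair of overcrossings is genuinely a valid diagram on the same shadow $G$ and that the designated subsegment becomes a well-defined arc after switching. The remaining small-case subtleties ($n \leq 2$, or the degenerate possibility of arcs that loop at a single crossing) affect only the precise enumeration in part (1) and can be absorbed either by a multigraph convention or by imposing $n \geq 3$.
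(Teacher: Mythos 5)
Your parts (1) and (3) are sound: the parity invariant for (3) and the pairwise distinctness of the $n$ endpoint pairs for $n\geq 3$ in (1) are exactly what is needed, and you correctly flag the $n\leq 2$ degeneracies. The genuine gap is in the surjectivity half of your bijection for part (2), at the step ``only $c$ and $c'$ change type from $u$ to $v$, so \ldots\ the interior of $a$ contains no undercrossings of $v$.'' This fails when the arc $a$ of $u$ passes \emph{over} one of its own endpoint crossings in its interior: switching that crossing turns an interior over-passage of $a$ into an under-passage of $v$, the segment $a$ is split between two different arcs of $v$, and the pair $\{c,c'\}$ then does not lie on a common over-arc of $v$, so this in-edge is not counted by $\sum_i\binom{m_i}{2}$. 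This situation really occurs. Take the trefoil shadow with Gauss sequence $1,2,3,1,2,3$ and let $u$ be the diagram whose passages read $U,O,O,O,U,U$ in that order. Its unique long arc runs from the under-passage at crossing $1$ to the under-passage at crossing $2$ and passes over crossings $2,3,1$ in its interior; switching its two endpoint crossings $1$ and $2$ produces the alternating diagram $O,U,O,U,O,U$. Hence the alternating trefoil diagram has indegree at least $1$ (in fact $3$), while the formula gives $\sum_i\binom{1}{2}=0$.

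This is not a repairable technicality in your argument alone: your forward map (pairs of over-crossings on a common arc of $v$ mapped to in-edges) is well defined and injective, so the formula is in general only a lower bound for the indegree, and the bound can be strict. Indeed parts (1) and (2) cannot both hold as literally stated: on the trefoil shadow the eight vertices have total outdegree $8\cdot 3=24$ by (1), while summing the claimed indegrees over all vertices gives $6\cdot 3+2\cdot 0=18$. Since the paper leaves these proofs to the reader, the statement itself needs either an additional hypothesis (no arc of any diagram on $G$ passes over one of its own endpoint crossings) or a corrected count that includes these degenerate in-edges; a direct check shows the figure-eight shadow of Example 4.1 happens not to exhibit the phenomenon at its alternating vertex, so that example does not detect the problem. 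At minimum, you should isolate the degenerate case explicitly rather than asserting that the interior of $a$ survives the change untouched.
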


As a corollary, if $m_i\leq 1$ for all $1\leq i\leq n$, then the vertex $v$ has indegree 0. The figure-8 knot diagram in Example \ref{4.1} falls exactly into this situation. It follows that this kind of knot diagram (e.g. alternating knot diagram) cannot be obtained from any other knot diagram with the same shadow by arc crossing changes. 

Given a knot diagram, now we know that in general not every two crossing points are arc crossing change admissible. The rest of this section is devoted to show that for some special knot diagrams, for example alternating knot diagrams, this statement holds true.

\subsection{A proof of Theorem \ref{the4}}\label{4.3}
\begin{definition}
Let $K$ be a knot diagram and $G$ the underlying 4-valent plane graph, for two crossing points $x, y\in C(K)$, we say an $x-y$ trail $T$ in $G$ is \emph{admissible}, if $T$ begins with an understrand of $x$ and ends with an understrand of $y$, and when $T$ meets a crossing $v\in C(K)\setminus\{x, y\}$, then $T$ either goes straight along the overstrand or turn left/right at an overcrossing/undercrossing. See the dotted lines depicted in Figure \ref{walk}.

\begin{figure}[h]
\centering
\includegraphics{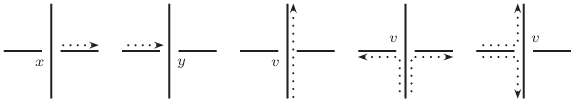}
\caption{When an $x-y$ trail meets a crossing}
\label{walk}
\end{figure}
\end{definition}

In order to prove Theorem \ref{the4}, we need the following lemma.

\begin{lemma}\label{4.4}
If two crossing points $x, y$ of a knot diagram $K$ can be connected by an admissible $x-y$ trail, then $x, y$ are arc crossing change admissible.
\end{lemma}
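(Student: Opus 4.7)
My plan is to build an explicit sequence of arc crossing changes guided by the admissible trail $T$. The admissibility condition, and in particular the prohibition of going straight along an understrand, implies that between consecutive turning crossings (those where $T$ switches strands) the trail lies on a single strand of $K$, hence inside a single arc. This partitions $T$ into phases $T_1,\ldots,T_{m+1}$, where $m$ is the number of turns, with $T_i$ contained in an arc $A_i$. I will call a phase $T_i$ \emph{aligned} when the two trail-endpoint crossings of $T_i$ coincide with the two undercrossing endpoints of $A_i$. A local inspection at a turning crossing shows that $T_i$ is aligned iff it begins at $x$ or at the departure of an over-to-under (type b) turn, and ends at $y$ or at the arrival of an under-to-over (type c) turn.

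The core structural observation is that every admissible trail admits at least one aligned phase: if the first turn is type c then $T_1$ is aligned, if the last turn is type b then $T_{m+1}$ is aligned, and otherwise the turn-type sequence begins with b and ends with c and therefore contains some consecutive $(b,c)$ pair, producing an aligned middle phase. The plan is then to iterate as follows: apply arc crossing change to a currently aligned phase (which switches exactly its two trail-endpoints), update the diagram, and proceed. Each operation flips the over/under at two trail-points, which toggles their turn types and creates new aligned phases among the yet-unhandled ones. Since each internal trail crossing $v_i$ lies on exactly two phases ($T_i$ and $T_{i+1}$), and since the scheme will operate on each phase exactly once, each $v_i$ is switched exactly twice (net zero), while $x$ --- which lies only on $T_1$ --- and $y$ --- which lies only on $T_{m+1}$ --- are each switched exactly once, as required.

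The main obstacle is the rigorous verification that the iterative algorithm actually runs to completion, operating on each phase exactly once. Concretely I need to check that at every intermediate stage before termination there is still an aligned phase available, and that the algorithm never revisits a phase. I expect to handle this by an inductive argument using a potential equal to the number of phases not yet operated on, combined with a local case analysis at each of the three admissible configurations at a crossing (straight along the overstrand, type b turn, type c turn) to confirm that handling one aligned phase always promotes an unhandled neighbor to aligned status. Once this bookkeeping is in place, the parity count in the previous paragraph gives the conclusion.
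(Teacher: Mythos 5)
Your overall strategy is sound and is organized genuinely differently from the paper's proof: the paper sweeps the trail linearly from $x$, first using backward chains of arc crossing changes to push every ``overcrossing turn'' toward the $y$-end (so that $v_1,\ldots,v_l$ are undercrossings and $v_{l+1},\ldots,v_k$ are overcrossings), and then runs two all-undercrossing chains that meet at $v_{l+1}$; you instead greedily operate on any phase that is currently a genuine arc, wherever it sits along the trail. Your decomposition into phases, your characterization of when a phase is aligned (start at $x$ or at an over-to-under turn; end at $y$ or at an under-to-over turn), the existence of an initial aligned phase, and the final parity count (each internal turn switched twice, $x$ and $y$ once each, provided each phase is operated on exactly once) are all correct.

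The gap is in the step you yourself flag as the main obstacle, and the repair you propose does not work as stated. The local claim that ``handling one aligned phase always promotes an unhandled neighbor to aligned status'' is false: take turn types $b,b,c,b,c$ at $v_1,\ldots,v_5$. The phase $T_3$ between $v_2$ and $v_3$ is aligned; operating on it toggles $v_2$ to $c$ and $v_3$ to $b$, which makes $T_2$ aligned but leaves $T_4$ unaligned, since its other endpoint $v_4$ is still type $b$. So progress is not guaranteed by looking only at the neighbors of the phase just handled. The correct statement is global. Model the phases $T_1,\ldots,T_{m+1}$ as the vertices of a path whose edges are the internal turns $v_1,\ldots,v_m$, each oriented toward the phase for which it is ``good'' (toward $T_j$ if $v_j$ is type $c$, toward $T_{j+1}$ if type $b$); then a phase is aligned exactly when it is a sink, and operating on it reverses its incident edges. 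The lemma you need is that sink-popping on an (automatically acyclic) orientation of a path pops every vertex exactly once: pop any sink $T_i$; its incident edges now point away from it and are never reversed again, so they permanently satisfy the adjacent conditions of $T_{i-1}$ and $T_{i+1}$, and the problem splits into two independent shorter subpaths, to which induction applies. With that lemma inserted in place of your local invariant, your parity count completes the proof, and the resulting argument is arguably cleaner than the paper's, which needs the somewhat fiddly preprocessing pass. (One caveat shared with the paper's own proof: if the trail revisits a crossing, a phase's interior overpass can be switched by an operation elsewhere, so both arguments implicitly assume the relevant crossings of the trail are distinct.)
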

\begin{proof}
By ignoring the crossings in the trail that are passed through in a straight line, let us consider the vertices in this trail where the trail turns left or right when passing through them, and denote them by $v_1, \ldots, v_k$. Walking along $T$ beginning with $x$, if each $v_i$ ($1\leq i\leq k$) is an undercrossing when we meet it, then by applying arc crossing changes to the arcs between $x$ and $v_1$, $v_1$ and $v_2$, $\cdots$, $v_k$ and $y$ one by another, we obtain a new knot diagram where only $x$ and $y$ are switched. Note that in $K$, the piece of $K$ between $v_1$ and $v_2$ is not an arc, however after applying arc crossing change to the arc between $x$ and $v_1$, it becomes an arc in the newly obtained knot diagram.

If not all $v_1, \ldots, v_k$ are undercrossings, without loss of generality, we assume $v_i$ is the first overcrossing and $v_{i+1}, \ldots, v_{i+j}$ are all overcrossings, but $v_{i+j+1}$ is an undercrossing. After applying arc crossing changes to the arcs between $v_{i+j}$ and $v_{i+j+1}$, $v_{i+j-1}$ and $v_{i+j}$, $\cdots$, $v_i$ and $v_{i+1}$, we obtain a new knot diagram where $v_i$ becomes an undercrossing and $v_{i+j+1}$ becomes an overcrossing. See Figure \ref{OU} for an example of $j=2$. 

\begin{figure}[h]
\centering
\includegraphics{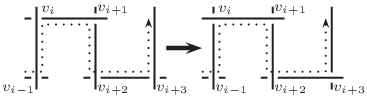}
\caption{Switching $v_i$ and $v_{i+j+1}$}
\label{OU}
\end{figure}

Repeat this progress until $v_1, \ldots, v_l$ are undercrossings and $v_{l+1}, \ldots, v_k$ are overcrossings. Now there exists an admissible trail from $y$ to $v_{l+1}$ and all the crossings in between are undercrossings. According to the discussion above, $y$ and $v_{l+1}$ are arc crossing change admissible. After switching $y$ and $v_{l+1}$, we find that now $x$ and $v_{l+1}$ are arc crossing change admissible since there exists an admissible trail from $x$ to $v_{l+1}$ and all crossings in between are undercrossings. Finally, we obtain a new knot diagram of which only $x$ and $y$ have been changed, as desired.
\end{proof}

Now we turn to the proof of Theorem \ref{the4}.

\begin{proof}
Let $K$ be an alternating knot diagram and $G$ the underlying 4-valent plane graph, we can make $G$ into a two-in two-out directed graph as in Figure \ref{2i2o}. The assumption that $K$ is alternating ensures the rationality of the orientation of each edge in graph $G$.

\begin{figure}[h]
\centering
\includegraphics{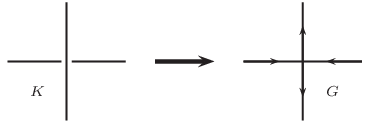}
\caption{Obtain a directed graph $G$ from an alternating diagram $K$}
\label{2i2o}
\end{figure}

Choose two vertices of $G$, denoted by $x$ and $y$. Let us use $x'$ and $x''$ to denote the two vertices incident to $x$ which are directed toward $x$. Note that $x'\neq x''$ since $K$ is a knot diagram. 

We claim that there exists a directed trail in $G$ with first vertex $x'$ or $x''$ and last vertex $y$ which does not pass through the vertex $x$. Let us proceed by induction on the crossing number $n$.

When $n\leq4$, here we list some examples in Figure \ref{234}. For the first and second directed graphs, we have $x'=y$. For the third directed graph with four vertices, the edge $a_2$ connecting $x'$ and $y$ provides the trail desired. For other cases of directed graph with at most four vertices or other choices of $\{x, y\}$, it can be verified similarly.

\begin{figure}[h]
\centering
\includegraphics{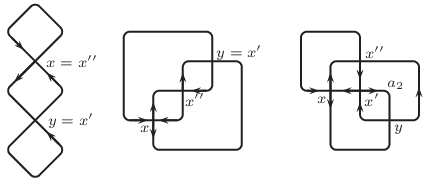}
\caption{Directed graphs with 2, 3, 4 vertices}
\label{234}
\end{figure}

Now we assume that $n\geq5$ and the claim holds true for all alternating knot diagrams with at most $n-1$ crossings and any chosen pair of vertices. Consider an alternating knot diagram $K$ with $n$ crossing points, since $n\geq5$, we can choose a vertex $z$ of $G$ such that $z\notin\{x, x', x'', y\}$. Now we resolve the vertex $z$ as in Figure \ref{z}. Notice that there are two types of resolutions and we choose the one such that new diagram with crossing number $n-1$ is also a knot diagram.

\begin{figure}[h]
\centering
\includegraphics{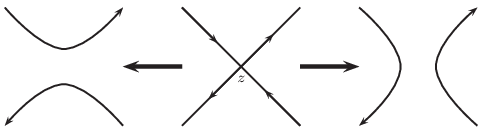}
\caption{Resolution of the vertex $z$}
\label{z}
\end{figure}

Now the new 4-valent plane graph is a two-in two-out directed graph with $n-1$ vertices. By assumption, there exists a directed trail with first vertex $x'$ or $x''$ and last vertex $y$ which does not pass through $x$. Whether the vertex $z$ is involved in this trail or not, this directed trail corresponds to a directed trail in $G$ connecting $x'$ or $x''$ and $y$ which does not pass through $x$. We complete the proof of the claim.

Without loss of generality, we assume that there exists a trail with first vertex $x'$ and last vertex $y$. Let us use $a_2, \ldots, a_k$ to denote the edges of this trail and use $a_1$ to denote the edge between $x$ and $x'$. Back to the diagram $K$, note that $a_1\cup a_2$ is an arc of $K$, and $a_1\cup a_2, a_3, \ldots, a_k$ constitute an admissible trail in $K$ connecting $x$ and $y$. As Lemma \ref{4.4} suggests, if we apply arc crossing changes to $a_1\cup a_2, a_3, \ldots, a_k$ one by one, finally we obtain a new knot diagram where only $x$ and $y$ are switched. See Figure \ref{path} for an example.

\begin{figure}[h]
\centering
\includegraphics{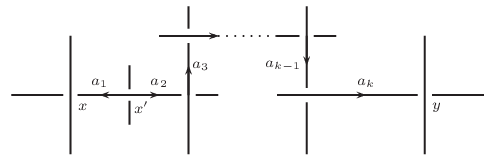}
\caption{A directed path from $x'$ to $y$}
\label{path}
\end{figure}

The proof is finished.
\end{proof}

\begin{remark}
Following the proof of Theorem \ref{the4}, a natural question arises: if every pair of crossings of a knot diagram $K$ is arc crossing change admissible, must $K$ be an alternating knot diagram? The answer to this question is not always affirmative. The knots $8_{19}$ and $8_{21}$ lend support to this question, but $8_{20}$ falsifies it. See Figure \ref{qqq}. Specifically, the diagram of $8_{20}$ is not alternating, yet every pair of its crossings is arc crossing change admissible.
\end{remark}

\begin{figure}[H]
\centering
\includegraphics[width=0.7\linewidth]{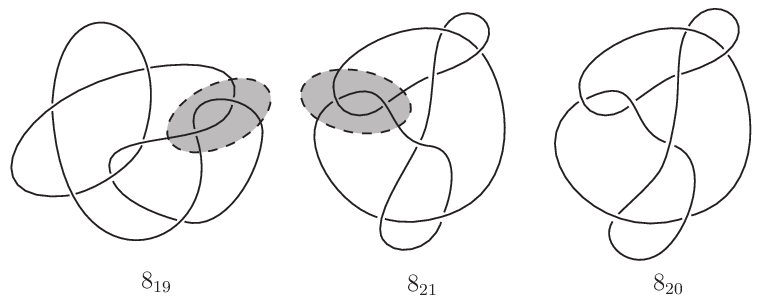}
\caption{Crossings in the shadow area are not arc crossing admissible}
\label{qqq}
\end{figure}

We end this paper with the following question.

\begin{question}
Is the converse of Lemma \ref{4.4} valid? Or more specific, find a necessary and sufficient condition for two crossings in a knot diagram to be arc crossing change admissible.
\end{question}

\section*{Acknowledgement}
Zhiyun Cheng was supported by the NSFC grant 12371065. Qian Liao, Jingze Song were supported by a REU program Grant X202510027207 at Beijing Normal University.
\section*{References}
\begin{biblist}
\bib{Cer2024}{article}{
   author={Cericola, Christopher},
   title={Arc crossing change is an unknotting operation},
   journal={J. Knot Theory Ramifications},
   volume={33},
   date={2024},
   number={1},
   pages={Paper No. 2350091, 14}}

\bib{CCXZ2022}{article}{
    author={Jiawei Cheng},
    author={Zhiyun Cheng},
    author={Jinwen Xu},
    author={Jieyao Zheng},
    title={Region crossing change on surfaces},
    journal={Mathematische Zeitschrift},
    volume={300},
    date={2022},
    number={3},
    pages={2289--2308}}

\bib{CG2012}{article}{
	author={Zhiyun Cheng},
        author={Hongzhu Gao},
	title={On region crossing change and incidence matrix},
	journal={Science China Mathematics},
	volume={55},
	date={2012},
	number={7},
	pages={1487--1495}}

\bib{CS2025}{article}{
	author={Zhiyun Cheng},
        author={Jingze Song},
	title={Region crossing change on nonorientable surfaces},
	journal={arXiv:2506.05885},
	date={2025}}

\bib{Che2013}{article}{
	author={Zhiyun Cheng},
	title={When is region crossing change an unknotting operation?},
	journal={Math. Proc. Cambridge Philos. Soc},
	volume={155},
	date={2013},
	number={2},
	pages={257--269}}

\bib{DR2018}{article}{
	author={Oliver Dasbach},
        author={Heather M. Russell},
	title={Equivalence of edge bicolored graphs on surfaces},
	journal={Electron. J. Combin.},
	volume={25},
	date={2018},
	number={1},
	pages={Paper 1.59, 15 pp.}}
	
\bib{Kin2022}{article}{
       author={Kinuno, Rin},
       title={Structures of homomorphisms induced by arc selection game and arc freeze selection game},
       journal={J. Knot Theory Ramifications},
       volume={31},
       date={2022},
       number={14},
       pages={Paper No. 2250100, 19}}

\bib{Shi2014}{article}{
	author={Ayaka Shimizu},
	title={Region crossing change is an unknotting operation},
	journal={J. Math. Soc. Japan},
	volume={66},
	date={2014},
	number={3},
	pages={693--708}}

\end{biblist}
\end{document}